\def\ec{{\rm ec}\hskip0.02cm}
\def\ci{{\rm CI}\hskip0.02cm}
\newtheorem{theorem}{Theorem}
\newtheorem{definition}{Definition}
\newtheorem{proposition}{Proposition}
\newenvironment{remark}[1]{\medskip\par\noindent{\bf #1.}\rm}
{\medskip\par\noindent}
\newenvironment{proof}[1]{\medskip\par\noindent{\sc #1.\ }}
{~\rule{0.5em}{0.5em}\medskip\par}
\begin{document}

\title{\LARGE Minimum congestion spanning trees in planar graphs}

\author{M.~I.~Ostrovskii\footnote{Supported by St. John's University Summer 2009 Support
of Research Program}\\
Department of Mathematics and Computer Science\\
St. John's University\\
8000 Utopia Parkway\\
Queens, NY 11439, USA\\
Fax:(718)-990-1650\\
Phone: (718)-990-2469\\
e-mail: {\tt ostrovsm@stjohns.edu}}

\date{\today}
\maketitle

\noindent{\bf Abstract:} The main purpose of the paper is to
develop an approach to evaluation or estimation of the spanning
tree congestion of planar graphs. This approach is used to
evaluate the spanning tree congestion of triangular grids.
\medskip

\noindent{\bf Keywords:} Dual graph; minimum congestion spanning
tree; planar graph; spanning tree congestion

\begin{large}


\section{Introduction}

Let $G$ be a graph and let $T$ be a spanning tree in $G$ (saying
this we mean that $T$ is a subgraph of $G$). We follow the
terminology and notation of \cite{CH91}. For each edge $e$ of $T$
let $A_e$ and $B_e$ be the vertex sets of the components of $T-e$.
By $e_G(A_e,B_e)$ we denote the number of edges in $G$ with one
end vertex in $A_e$ and the other end vertex in $B_e$. We define
the {\it edge congestion} of $G$ in $T$ by
$$
\ec(G:T)=\max_{e\in E_T} e_G(A_e,B_e).
$$
The number $e_G(A_e,B_e)$ is called the {\it congestion} in $e$.
The name comes from the following analogy. Imagine that edges of
$G$ are roads, and edges of $T$ are those roads which are cleaned
from snow after snowstorms. If we assume that each edge in $G$
bears the same amount of traffic, and that after a snowstorm each
driver takes the corresponding (unique) detour in $T$, then
$\ec(G:T)$ describes the traffic congestion at the most congested
road of $T$. It is clear that for applications it is interesting
to find a spanning tree which minimizes the congestion.

We define {\it the spanning tree congestion} of $G$ by
\begin{equation}\label{E:s}
s(G)=\min\{\ec (G:T):~T\hbox{ is a spanning tree of }G\}.
\end{equation}
Each spanning tree $T$ in $G$ satisfying $\ec(G:T)=s(G)$ is called
a {\it minimum congestion spanning tree}. The parameters
$\ec(G:T)$ and $s(G)$ were introduced and studied in \cite{Ost04}.
This study was continued in
\cite{Car05,CCV07,CO09,Cox07,Hru08,Hun07,KOY09,LRR09,Tan07}, where
many interesting results were obtained.
\medskip

The spanning tree congestion is of interest in the study of
Banach-space-theoretical properties of Sobolev spaces on graphs,
see \cite{Ost05}. Many known results and algorithms related to
spanning trees are collected in the monograph \cite{WC04}, but
this monograph does not contain any results on the spanning tree
congestion. Many related parameters have been introduced in the
literature, see \cite{BCHRS00,KRY94} and references therein, the
paper \cite{KRY94} introduced parameters which are more general
than the spanning tree congestion. \medskip

One of the interesting problems about the spanning tree congestion
is to evaluate it for some natural families of graphs. The purpose
of this paper is to develop techniques which can be used to
evaluate or estimate the spanning tree congestion of planar
graphs. The techniques uses duality for planar graphs which goes
back to Poincar\`e and Whitney (see \cite[Section 8.8.2]{Ros99}
and \cite{Whi32,Whi33}) and the notion of a dual tree which is
implicitly present in the work of Whitney (see \cite[Problems 5.23
and 5.36]{Lov79}). Dual trees were introduced to this area by
Hruska \cite{Hru08} who used them to evaluate the spanning tree
congestion for rectangular planar grids.
\medskip

In conclusion we would like to mention that another techniques
used to estimate the spanning tree congestion is based on the
notion of a centroid of a tree (see \cite[p.~46]{WC04} or
\cite{Ost04} for the definition) and edge-isoperimetric
inequalities. This techniques was initiated in \cite{Ost04} and
developed in \cite{CO09} and \cite{KOY09}. It would be interesting
to obtain the results for triangular grid (Theorem \ref{T:triang})
using isoperimetry.

\section{Dual graphs and spanning tree congestion estimates}
\label{S:dual}

Let $G$ be a connected plane graph, that is, a planar graph with a
fixed drawing in the plane.

\begin{definition}\label{D:dualTree}
{\rm The {\it dual graph} $G^*$ of $G$ is defined as the graph
whose vertices are faces of $G$, including the exterior
(unbounded) face, and whose edges are in a bijective
correspondence with edges of $G$. The edge $e^*\in E(G^*)$
corresponding to $e\in E(G)$ joins the faces which are on
different sides of the edge $e$.
\smallskip

Let $T$ be a spanning tree of $G$. The {\it dual tree $T^\sharp$}
is defined as a spanning subgraph of $G^*$ whose edge set
$E(T^\sharp)$ is determined by the condition: $e^*\in E(T^\sharp)$
if and only if $e\notin E(T)$.}
\end{definition}

\begin{remark}{Note} The graph $G^*$ does not have to be a simple graph even when
$G$ is simple. It is easy to verify that $T^\sharp$ is a spanning
tree in $G^*$ (see \cite[Solution of Problem 5.23]{Lov79}). See
\cite[Section 5.6]{CH91} and \cite{Whi32,Whi33,Lov79} for
information about dual graphs.
\end{remark}

\begin{definition} {\rm
Let $e\in E(G)$. We say that $e$ is an {\it outer edge} if it is
an edge which occurs in the boundary of the exterior face and one
of the interior faces. For each outer edge $e$ and each bounded
face $F$ of $G$ define the {\it index} $i(F,e)$ as the length of a
shortest path in $G^*$ which joins the exterior face $O$ with $F$
and satisfies the additional condition: its first edge is $e^*$.}
\end{definition}

\begin{definition}\label{D:CenTail}
{\rm A {\it center-tail} system $\mathcal{S}$ in the dual graph
$G^*$ of a plane graph $G$ consists of

\begin{itemize}
\item[(1)] A set $C$ of vertices of $G^*$ spanning a connected
subgraph of $G^*$, the set $C$ is called a {\it center}.

\item[(2)] A set of paths in $G^*$ joining some vertices of the
center with the exterior face $O$. Each such path is called a {\it
tail}. The {\it tip of a tail} is the last vertex of the
corresponding path before it reaches the exterior face.

\item[(3)] An assignment of {\it opposite tails} for outer edges
of $G$. This means: For each outer edge $e$ of the graph $G$ one
of the tails is assigned to be the {\it opposite tail} of $e$, it
is denoted $N(e)$ and its tip is denoted by $t(e)$.
\end{itemize}
}\end{definition}

See Section \ref{S:triang} for examples of center-tail systems.
\medskip

The result below is true for an arbitrary system $\mathcal{S}$
satisfying the relations described above, but to be useful for
estimates of the spanning tree congestion, a center should consist
of vertices which are far from the exterior face in $G^*$ and
opposite tails should be tails which are in some natural metric
sense go in the directions which are opposite to the corresponding
edges.

\begin{definition}\label{D:CongInd}
{\rm The {\it congestion indicator} $\ci(\mathcal{S})$  of a
center-tail system $\mathcal{S}$ is defined as the minimum of the
following three numbers:

\begin{itemize}
\item[(1)] $\min_{F,H,f,h}(i(F,f)+i(H,h)+1)$, where the minimum is
taken over all pairs $F,H$ of adjacent vertices in the center $C$
and over all pairs $f,h$ of outer edges with $f\ne h$. In the
cases where the center contains just one face we assume that this
minimum is $\infty$.

\item[(2)] $\min_e i(t(e),e)+1$, where the minimum is taken over
all outer edges of $G$.

\item[(3)] $\min_e\min_{F\in N(e)}\min_{\tilde e\ne e}
(i(F,e)+i(\widetilde{F},\tilde{e})+1)$, where the first minimum is
taken over all outer edges of $G$;  the second minimum is over
vertices $F$ from the path $N(e)$, $\widetilde{F}$ is the vertex
in $N(e)$ which follows immediately after $F$ if one moves along
$N(e)$ from $F$ to $t(e)$; and the third minimum is over all outer
edges different from $e$,
\end{itemize}
}
\end{definition}

\begin{theorem}\label{T:ICvsS} Let $\mathcal{S}$ be any center-tail system in
a connected planar graph $G$. Then $s(G)\ge \ci(\mathcal{S})$.
\end{theorem}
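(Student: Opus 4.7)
The proof plan rests on the following reformulation via planar duality: for any spanning tree $T$ of $G$ with dual tree $T^\sharp$ and any $g\in E(T)$ whose dual is $g^*=FH$, the edges of $G$ crossing the cut $(A_g,B_g)$ form, together with $g^*$, the fundamental cycle of $g^*$ in $T^\sharp\cup\{g^*\}$; hence $e_G(A_g,B_g)=1+d_{T^\sharp}(F,H)$. The task is therefore, for every spanning tree $T$, to locate a tree edge whose dual endpoints are far apart in $T^\sharp$. To this end I would root $T^\sharp$ at $O$ and, for each $F\ne O$, define $\phi(F)$ to be the unique outer edge $e\notin T$ such that $e^*$ is the last edge on the $T^\sharp$-path from $F$ to $O$; reversing this path yields the basic inequality $d_{T^\sharp}(F,O)\ge i(F,\phi(F))$.

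The argument then splits according to the behavior of $\phi$ on the center $C$. If $\phi$ is non-constant on $C$, the connectivity of $C$ in $G^*$ produces adjacent $F,H\in C$ with $\phi(F)\ne\phi(H)$. A parenthood check---two vertices with different $\phi$-values cannot be adjacent in $T^\sharp$, for then one would be the other's parent in the rooted tree, forcing $\phi(F)=\phi(H)$---ensures that the $G$-edge $g$ between $F$ and $H$ lies in $T$, while the distinct $\phi$-values pin the least common ancestor of $F,H$ to $O$. This produces a congestion bound of $1+i(F,\phi(F))+i(H,\phi(H))$, matching the first quantity of $\ci(\mathcal{S})$.

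If instead $\phi\equiv e_0$ on $C$, I would walk along the opposite tail $N(e_0)=F_1,\dots,F_k=t(e_0),F_{k+1}=O$ tracking $\phi$. A transition $\phi(F_i)=e_0\ne\phi(F_{i+1})$ with $i<k$, together with the same parenthood check and the LCA-at-$O$ observation, produces a tree edge with congestion at least $1+i(F_i,e_0)+i(F_{i+1},\phi(F_{i+1}))$, matching the third quantity. If no such transition occurs, the final edge $F_kO$ of $N(e_0)$ is dual to some outer edge $e_t$; either $e_t\in T$, in which case the congestion at $e_t$ is at least $1+i(t(e_0),e_0)$ (matching the second quantity), or $e_t\notin T$, which forces $e_t=e_0$ and $t(e_0)$ to be the neighbor of $O$ across $e_0^*$, reducing the second quantity to $2$ and making the inequality trivial.

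The principal difficulty is less any single computation than the bookkeeping: making sure, in each branch, that the edge whose congestion is being estimated genuinely belongs to $T$ (so that ``congestion at that edge'' is even defined). This is handled uniformly by the parenthood observation that two $G^*$-vertices with distinct $\phi$-values cannot be $T^\sharp$-adjacent, so any $G^*$-edge between them is dual to a bona fide tree edge. Everything else is a direct combination of the duality identity $e_G(A_g,B_g)=1+d_{T^\sharp}(F,H)$ with the definitions of $i(\cdot,\cdot)$ and of the center-tail system.
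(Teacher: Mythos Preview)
Your proposal is correct and follows essentially the same approach as the paper: your function $\phi$ is precisely the paper's assignment of interior faces to ``branches'' (indexed by their entrance edge), and your three cases---$\phi$ non-constant on $C$, a transition along $N(e_0)$, and no transition---match the paper's Observation~1 and Observation~2(b),(a) respectively, with your identity $e_G(A_g,B_g)=1+d_{T^\sharp}(F,H)$ and LCA-at-$O$ remark replacing the paper's cycle/cut description. Your explicit treatment of the degenerate sub-case $e_t\notin T$ (forcing $e_t=e_0$ and $i(t(e_0),e_0)=1$) is a detail the paper glosses over, but otherwise the arguments coincide.
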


\begin{proof}{Proof} Let $T$ be a spanning tree in $G$ and
$T^\sharp$ be its dual tree. We split the set of interior faces of
$G$ into branches corresponding to outer edges (many of the
branches can be empty): the {\it branch} corresponding to an outer
edge $e$ is the set of faces  which are separated from the
exterior face $O$ if we delete $e^*$ from $T^\sharp$, we assume
that the branch is empty if $e^*$ is not an edge of $T^\sharp$.
The edge $e$ (corresponding to $e^*$) is called the {\it entrance}
of the branch.
\medskip

{\bf Observation 1.} If faces of the center $C$ belong to
different branches, then $\ec(G:T)\ge$ the minimum in item (1) of
Definition \ref{D:CongInd}.
\medskip

In fact, let $F$ and $H$ be faces which are adjacent in $G^*$ and
belong to two different branches with entrances at $f$ and $h$,
respectively. Let $g$ be an edge which occurs in the boundaries of
the faces $F$ and $H$. It is clear that $g\in E(T)$ (otherwise $T$
would be disconnected). It suffices to show that $e_G(A_g,B_g)\ge
i(F,f)+i(H,h)+1$.
\medskip

Let $f^*_1,\dots,f^*_k$ be the $OF$-path in $T^\sharp$ and
$h_1^*,\dots,h_m^*$ be the $OH$-path in $T^\sharp$. It is clear
that $k\ge i(F,f)$ and $m\ge i(H,h)$. To complete the proof we
show that $g$ is used in detours for
$f_1,\dots,f_k,h_1,\dots,h_m$, and itself. In fact, the edges
$f_1^*,\dots,f^*_k,g^*,h^*_m,\dots,h^*_1$ form a cycle in $G^*$.
Hence the edges $f_1,\dots, f_k, g, h_m,\dots,h_1$ form a cut in
$G$, and $g$ is the only edge in $T$ connecting the vertex sets
separated by the cut. This completes our proof of Observation 1.
\medskip

{\bf Observation 2.} Suppose that all faces of the center belong
to the same branch with entrance $e$. Then:
\medskip

(a) If all faces from the tail $N(e)$ also belong to the same
branch (with entrance $e$), then $\ec(G:T)\ge$ the minimum in item
(2) of Definition \ref{D:CongInd}.
\medskip

(b) If some faces from the tail $N(e)$ belong to another branch,
then $\ec(G:T)\ge$ the minimum in item (3) of Definition
\ref{D:CongInd}.
\medskip

In fact, in the case (a) let $g$ be an edge which occurs in the
boundaries of the the tip $t(e)$ and the outer face $O$. As in
Observation 1 we get that $g\in E(T)$ (otherwise $T$ would be
disconnected) and that $e_G(A_g,B_g)\ge i(t(e),e)+1$.
\medskip

In the case (b) let $\widetilde{F}$ be the first face on the path
$N(e)$ (we assume that the path starts at a vertex of the center)
which belongs to a different branch and let $F$ be the previous
edge of the path. Let $g\in E(G)$ be the edge corresponding to the
edge of $g^*\in E(G^*)$ joining $F$ and $\widetilde{F}$. Let
$\tilde e$ be the entrance of the branch to which $\widetilde{F}$
belongs. As in the previous observations we show that
$e_G(A_g,B_g)\ge i(F,e)+i(\widetilde{F},\tilde{e})+1$; and we are
done.\medskip

It is clear that together Observations 1 and 2 imply the statement
of the theorem.
\end{proof}

In our estimates of $s(G)$ from above we use the following
definition.

\begin{definition} {\rm The  {\it absolute
index} $i(F)$ of a face $F$ is defined as $\min_ei(F,e)$, where
the minimum is over all outer edges.}
\end{definition}

\begin{proposition}\label{P:above} For each connected planar graph $G$ we have
\begin{equation}\label{E:above}s(G)\le\max (i(F)+i(\widetilde{F}))+1,\end{equation} where the maximum is
over all pairs $F,\widetilde{F}$ of faces which have a common edge
in their boundaries.
\end{proposition}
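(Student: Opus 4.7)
The plan is to exhibit, via planar duality, a spanning tree $T$ of $G$ whose edge congestion meets the stated bound. Concretely, I would first construct a shortest-path (BFS) tree $T^\sharp$ in $G^*$ rooted at the exterior face $O$, so that for every bounded face $F$ the unique $T^\sharp$-path from $F$ to $O$ has length $d_{G^*}(O,F)$, which by unfolding the definitions equals the absolute index $i(F)$. I would then define $T\subseteq E(G)$ by declaring $e\in E(T)$ iff $e^*\notin E(T^\sharp)$; the Note after Definition \ref{D:dualTree} guarantees that this $T$ is a spanning tree of $G$ (the primal/dual correspondence runs in both directions).

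The core of the argument is the classical cut/cycle duality for planar graphs. For any $g\in E(T)$, the fundamental cut $[A_g,B_g]_G$ obtained by deleting $g$ from $T$ is a bond of $G$, and under planar duality it is the edge set of the fundamental cycle $C^*$ of $g^*$ with respect to $T^\sharp$; in particular
$$e_G(A_g,B_g)=|C^*|.$$
Now $C^*$ is the concatenation of $g^*$ with the unique $T^\sharp$-path between the two faces $F,\widetilde F$ incident to $g$, so by the triangle inequality in the tree metric of $T^\sharp$ and the shortest-path property of the BFS tree,
$$|C^*|=1+d_{T^\sharp}(F,\widetilde F)\le 1+d_{T^\sharp}(F,O)+d_{T^\sharp}(\widetilde F,O)=1+i(F)+i(\widetilde F).$$
Since $F$ and $\widetilde F$ share the common edge $g$, taking the maximum over $g\in E(T)$ gives $\ec(G:T)\le\max(i(F)+i(\widetilde F))+1$, which by definition of $s(G)$ yields \eqref{E:above}.

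I expect the main technical hurdle to be a clean invocation of the cut/cycle correspondence: verifying that the cut $[A_g,B_g]_G$ is precisely the set of primal preimages of the edges of the fundamental cycle of $g^*$ in $T^\sharp$. This is Whitney duality, but worth stating explicitly, since the bound $e_G(A_g,B_g)=|C^*|$ is the only nontrivial ingredient. A minor boundary nuisance is the case when one of the faces incident to $g$ is the exterior face $O$ itself, where $i(\cdot)$ was only defined for bounded faces; this is resolved either by the convention $i(O)=0$ or by observing directly that the $T^\sharp$-path collapses to a single vertex, so that $|C^*|=i(\widetilde F)+1$ still fits the stated inequality.
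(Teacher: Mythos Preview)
Your proposal is correct and follows essentially the same route as the paper: build a BFS tree $T^\sharp$ in $G^*$ rooted at $O$, take $T$ to be its primal complement, and bound the congestion in each $g\in E(T)$ by the length of the fundamental cycle of $g^*$ in $T^\sharp$, which in turn is at most $i(F)+i(\widetilde F)+1$ via the two root-paths. Your version is in fact a bit more explicit than the paper's in naming the cut/cycle duality $e_G(A_g,B_g)=|C^*|$ and in flagging the boundary case where one incident face is $O$.
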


\begin{proof}{Proof} We let $T^\sharp$ be a so-called breadth-first-search (BFS) tree in
$G^*$ rooted at the external face $O$. See \cite[Section
9.2.1]{Ros99} for a definition of a  breadth-first-search tree.
The definition in \cite{Ros99} explains the name. We need only the
following defining property of a BFS tree in a connected graph
$H$: it is a spanning tree in $H$ for which the distance between
any vertex and the root in the tree is the same as in $H$. It is
easy to see that BFS trees exist in an arbitrary connected graph.
\medskip

So let $T^\sharp$ be a rooted at $O$ BFS tree in $G^*$ and
$E(T^\sharp)$ be the edge set of $T^\sharp$. We delete from $E(G)$
the set $\{e: e^*\in E(T^\sharp)\}$. It is easy to check that we
get a spanning tree (see \cite[Solution of Problem 5.23]{Lov79}
for detailed explanation). We denote it by $T$ because $T^\sharp$
is its dual tree.
\medskip

Consider any edge $f\in E(T)$. Suppose that it occurs in the
boundaries of faces of $F_1$ and $F_2$.
\medskip

Observe that the number of edges detours for which use $f$ is
equal to the number of edges in the cycle contained in
$T^\sharp\cup\{f^*\}$. The length of the cycle is $\le
i(F_1)+i(F_2)+1$ because the cycle is a part of the closed walk
which starts at $O$, uses an $OF_1$-path in $T^\sharp$, then
$f^*$, and then an $F_2O$-path in $T^\sharp$.
\end{proof}

\begin{remark}{Remark} The proof of Proposition \ref{P:above} explains why we do not have the equality
in \eqref{E:above}: in some cases the cycles contained in
$T^\sharp\cup\{f^*\}$, where $f^*$ is an edge joining faces $F$
and $\widetilde{F}$ maximizing $i(F)+i(\widetilde{F})$ do not pass
through $O$. To illustrate this remark we consider the following
planar graph $H$: It is obtained if we consider $n$ concentric
circles and $k$ radial line segments, $n\gg k$. Each intersection
of a circle and a line segment is regarded as a vertex. (See
Figure 1, where $n=3$, $k=4$, and ``circles'' are sketched as
squares. We do not have $n\gg k$ in this picture, but it shows how
we construct the spanning tree (drawn using ``fat'' edges), also
it shows values of absolute indices of different faces.)
\medskip

\begin{center}
\setlength{\unitlength}{2mm}
\begin{picture}(58,27)

\put(19,12){1} \put(23,12){2} \put(27,12){3} \put(31,12){3}
\put(35,12){2} \put(39,12){1}

\put(29,23){1} \put(29,19){2} \put(29,15){3} \put(29,10){3}
\put(29,7){2} \put(29,3){1}

\linethickness{1.5pt}\qbezier(29,13)(29,13)(41,25)
\linethickness{0.3pt}\qbezier(29,13)(29,13)(17,1)

\linethickness{0.3pt}\qbezier(17,25)(17,25)(41,1)

\linethickness{1.5pt}\qbezier(17,25)(17,25)(17,1)
\linethickness{1.5pt}\qbezier(21,21)(21,21)(21,5)
\linethickness{1.5pt}\qbezier(25,17)(25,17)(25,9)

\linethickness{1.5pt}\qbezier(41,25)(41,25)(41,1)
\linethickness{1.5pt}\qbezier(37,21)(37,21)(37,5)
\linethickness{1.5pt}\qbezier(33,17)(33,17)(33,9)

\linethickness{1.5pt}\qbezier(17,25)(17,25)(41,25)
\linethickness{1.5pt}\qbezier(21,21)(21,21)(37,21)
\linethickness{1.5pt}\qbezier(25,17)(25,17)(33,17)

\linethickness{0.3pt}\qbezier(17,1)(17,1)(41,1)
\linethickness{0.3pt}\qbezier(21,5)(21,5)(37,5)
\linethickness{0.3pt}\qbezier(25,9)(25,9)(33,9)

\end{picture}
\bigskip

\centerline{Figure 1}
\end{center}

For such graph the absolute indices $i(F)$ of faces $F$ contained
in the smallest circle are equal to $n$. On the other hand, it is
easy to check that the spanning tree $T$ in $H$ consisting of all
edges from one of the line segments and all edges from circles
with one edge per circle removed satisfies $\ec(G:T)\le 2k$
(actually, if we remove edges from circles in an optimal way, it
will satisfy $\ec(G:T)\le k+2$, see Figure 1).
\end{remark}

\section{Triangular grids}\label{S:triang}

Now we are going to use center-tail systems to find the spanning
tree congestion for triangular grids $\{T_k\}_{k=2}^n$. The graph
$T_k$ is defined as the graph which we obtain if we divide each
side of a triangle into $k-1$ equal pieces and join the
corresponding subdivision points of different sides of the
triangle. To make this definition clear we sketch $T_2$, $T_3$,
and $T_4$ (see Figure 2). In these graphs all intersections of
line segments are regarded as vertices, and there are no other
vertices.

\begin{center}
\setlength{\unitlength}{2mm}
\begin{picture}(58,27)

\linethickness{0.3pt}\qbezier(7,1)(7,1)(14.6,1)
\linethickness{0.3pt}\qbezier(7,1)(7,1)(10.8,7.7)
\linethickness{0.3pt}\qbezier(10.8,7.7)(10.8,7.7)(14.6,1)

\linethickness{0.3pt}\qbezier(17,1)(17,1)(32.2,1)
\linethickness{0.3pt}\qbezier(17,1)(17,1)(24.6,14.4)
\linethickness{0.3pt}\qbezier(24.6,14.4)(24.6,14.4)(32.2,1)

\linethickness{0.3pt}\qbezier(20.8,7.7)(20.8,7.7)(28.4,7.7)
\linethickness{0.3pt}\qbezier(20.8,7.7)(20.8,7.7)(24.6,1)
\linethickness{0.3pt}\qbezier(24.6,1)(24.6,1)(28.4,7.7)

\linethickness{0.3pt}\qbezier(35,1)(35,1)(57.8,1)
\linethickness{0.3pt}\qbezier(35,1)(35,1)(46.4,21.1)
\linethickness{0.3pt}\qbezier(46.4,21.1)(46.4,21.1)(57.8,1)

\linethickness{0.3pt}\qbezier(38.8,7.7)(38.8,7.7)(54,7.7)
\linethickness{0.3pt}\qbezier(42.6,14.4)(42.6,14.4)(50.2,14.4)

\linethickness{0.3pt}\qbezier(38.8,7.7)(38.8,7.7)(42.6,1)
\linethickness{0.3pt}\qbezier(42.6,14.4)(42.6,14.4)(50.2,1)

\linethickness{0.3pt}\qbezier(54,7.7)(54,7.7)(50.2,1)
\linethickness{0.3pt}\qbezier(50.2,14.4)(50.2,14.4)(42.6,1)

\end{picture}
\bigskip

\centerline{Figure 2}
\end{center}

\begin{theorem}\label{T:triang} $s(T_{3n})=4n$, $s(T_{3n+1})=4n$,
$s(T_{3n+2})=4n+2$, $n=0,1,2,\dots$.
\end{theorem}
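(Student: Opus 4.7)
The plan is to prove matching upper and lower bounds: the upper bound from Proposition~\ref{P:above} and the lower bound from Theorem~\ref{T:ICvsS} applied to a carefully chosen center-tail system.

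I parameterize the small triangles of $T_k$ by triples $(a,b,c)\in\mathbb{Z}_{\ge 0}^3$, using $a+b+c = k-2$ for up-triangles and $a+b+c = k-3$ for down-triangles, so that the coordinates record the number of layers separating a triangle from each of the three sides $S_1$, $S_2$, $S_3$ of the big triangle. The dual graph $T_k^*$ is then bipartite: each up-triangle $(a,b,c)$ is adjacent to the three down-triangles obtained by decrementing a positive coordinate, and it is adjacent to $O$ via each coordinate that equals $0$. A shortest-path analysis in these coordinates yields
\[
i(F) = 2\min(a,b,c) + 1 \text{ if } F \text{ is up, and } i(F) = 2\min(a,b,c) + 2 \text{ if } F \text{ is down.}
\]
Enumerating adjacent pairs $(F,\widetilde F)$, a short case analysis shows $\max(i(F)+i(\widetilde F)) = 4n-1$ for $k \in \{3n,3n+1\}$ and $= 4n+1$ for $k = 3n+2$, with the extremum realized at the most central up-triangle together with an adjacent most central down-triangle. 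Adding one yields the upper bounds via Proposition~\ref{P:above}.

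For the lower bound I build a center-tail system $\mathcal{S}$. Take $C$ to be the most central face, or an adjacent central pair when no single central face exists: $C = \{(n-1,n-1,n-1)_{\mathrm{down}}\}$ for $k=3n$; $C = \{(n,n,n)_{\mathrm{up}}\}$ for $k=3n+2$; and $C = \{(n,n-1,n)_{\mathrm{up}}, (n-1,n-1,n)_{\mathrm{down}}\}$ (an adjacent pair in $T_k^*$) for $k=3n+1$. For each outer edge $e$ on side $S_i$, take $N(e)$ to be one fixed geodesic from $C$ to $O$, independent of which $e \in S_i$: it zigzags from a vertex of $C$ by alternately incrementing the $i$th coordinate and decrementing one of the other two (switching which secondary coordinate is decremented once the first one reaches zero), ends at the corner up-triangle at the vertex $P_i$ opposite $S_i$, and then takes a final edge to $O$.

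It remains to check each of the three quantities in Definition~\ref{D:CongInd}. Item (1) is vacuous when $|C|=1$; for $k=3n+1$, a direct computation using that the minimizers of $i(F,\cdot)$ on the up member and of $i(H,\cdot)$ on the down member of $C$ can be realized on different outer edges gives exactly $4n$. Item (2) is a diameter estimate: the tip $t(e)$ has $i$th coordinate equal to $k-2$, so $i(t(e),e)+1 \ge 2(k-2)+2$, which is $\ge 4n$ or $4n+2$ as needed. Item (3) is the tight constraint. For each successive pair $F_j,F_{j+1}$ on $N(e)$ with $e \in S_i$, the bound $i(F_j,e) \ge 2 a_i(F_j)+1$ (up) or $+2$ (down) combines with $\min_{\tilde e \ne e} i(F_{j+1},\tilde e) = i(F_{j+1})$; the latter equality holds because $F_{j+1}$ on the zigzag has $a_i$ strictly greater than the other two coordinates, so $e \in S_i$ is never a minimizer of $i(F_{j+1},\cdot)$. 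A step-by-step inspection then shows that $i(F_j,e)+i(F_{j+1})+1$ equals the target exactly throughout the first phase of the zigzag and is strictly larger in the second phase.

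The main obstacle is the uniform verification of Item (3): the increments of $a_i$ along the zigzag cancel the decrements of $\min(a,b,c)$ on $F_{j+1}$ only while the same secondary coordinate is being decremented, so the computation must be checked at the turning point where the zigzag changes which coordinate it decrements and during the final steps near the corner. The case $k=3n+1$ additionally requires the separate verification of Item (1).
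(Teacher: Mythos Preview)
Your proposal is correct and follows the paper's overall strategy---upper bound via Proposition~\ref{P:above}, lower bound via Theorem~\ref{T:ICvsS} applied to a center-tail system---but the execution differs in useful ways. The paper works out $T_5$, $T_6$, $T_7$ from explicit figures and then inducts (adding a layer to each side raises every index by~$2$ and the target by~$4$); you instead set up barycentric coordinates $(a,b,c)$, derive the closed form $i(F)=2\min(a,b,c)+1$ (up) or $+2$ (down), and verify the three items of Definition~\ref{D:CongInd} uniformly in~$k$. Two further implementation choices differ: for $k=3n+1$ the paper's center has six faces while yours has only an adjacent pair---this suffices because Item~(1) already equals $4n$ on that single pair---and your tails run all the way to the opposite corner, whereas the paper's stop after only a few zigzag steps. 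Both sets of choices give $\ci(\mathcal S)$ equal to the claimed value; your coordinate bookkeeping makes the Item~(3) verification (increments of $a_i$ along the tail exactly offsetting the drop in the minimum coordinate of the next face during the first phase) especially transparent. One minor imprecision: at the very first step out of the center when $k=3n+2$ the coordinate $a_i$ is merely tied for largest, not strictly largest, but the conclusion you actually need---that the minimizer of $i(F_{j+1},\cdot)$ lies on a side other than $S_i$---still holds.
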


\begin{proof}{Proof} To estimate the spanning tree congestion from
below we use center-tail systems. Our descriptions of center-tail
systems $\mathcal{S}_n$ for $T_n$ are somewhat different in the
cases when $n=3k$, $n=3k+1$, and $n=3k+2$.
\medskip

We shall give a detailed argument for $n=5,6,7$ and use the
induction to derive the formula from the statement of the theorem.
\medskip

The case $n=5$. The center-tail system $\mathcal{S}_5$ is
described in the following way. The triangle containing the letter
$C$ (see Figure 3) is the only element of the center. There are
three tails, shown in Figure 3 using ``fat'' lines; we do not show
edges joining tips of tails and $O$. The tail going in the
upward-right direction is assigned to be the opposite tail for all
outer edges contained in the bottom side of the triangle.
Assignment of the opposite tails to edges from other sides of the
triangle is made in order to make the assignment rotationally
invariant for angles of $120^\circ$ and $240^\circ$.

\begin{center}
\setlength{\unitlength}{2mm}
\begin{picture}(58,29)

\linethickness{0.3pt}\qbezier(13.8,1)(13.8,1)(44.2,1)
\linethickness{0.3pt}\qbezier(13.8,1)(13.8,1)(29,27.8)
\linethickness{0.3pt}\qbezier(29,27.8)(29,27.8)(44.2,1)

\linethickness{0.3pt}\qbezier(17.6,7.7)(17.6,7.7)(40.4,7.7)
\linethickness{0.3pt}\qbezier(21.4,14.4)(21.4,14.4)(36.6,14.4)
\linethickness{0.3pt}\qbezier(25.2,21.1)(25.2,21.1)(32.8,21.1)

\linethickness{0.3pt}\qbezier(17.6,7.7)(17.6,7.7)(21.4,1)
\linethickness{0.3pt}\qbezier(21.4,14.4)(21.4,14.4)(29,1)
\linethickness{0.3pt}\qbezier(25.2,21.1)(25.2,21.1)(36.6,1)

\linethickness{0.3pt}\qbezier(21.4,1)(21.4,1)(32.8,21.1)
\linethickness{0.3pt}\qbezier(29,1)(29,1)(36.6,14.4)
\linethickness{0.3pt}\qbezier(36.6,1)(36.6,1)(40.4,7.7)

\put(27,8.4){$C$}

\linethickness{1.5pt}\qbezier(21.4,9.9)(21.4,9.9)(25.2,12.1)
\linethickness{1.5pt}\qbezier(25.2,12.1)(25.2,12.1)(29,9.9)
\linethickness{1.5pt}\qbezier(29,9.9)(29,9.9)(29,5.4)

\linethickness{1.5pt}\qbezier(29,5.4)(29,5.4)(32.8,3.2)
\linethickness{1.5pt}\qbezier(29,9.9)(29,9.9)(32.8,12.1)
\linethickness{1.5pt}\qbezier(32.8,12.1)(32.8,12.1)(32.8,16.6)

\end{picture}
\bigskip

\centerline{Figure 3}
\end{center}

Now we evaluate for $\mathcal{S}_5$ all of the minima from the
definition of the $\ci$. The first minimum is $\infty$ by the
definition because the center contains just one face.
\medskip

Because of the symmetry in the second and the third minimum it
suffices to consider the minima over the edges of the bottom side
only. To do this it is convenient to introduce $i_{\rm
bot}(F)=\min i(F,e)$, where $F$ is a face of the triangle and the
minimum is over $e$ from the bottom side of the triangle. The
values of $i_{\rm bot}$ are shown in Figure 4. It is clear from
Figure 4 that for each $e$ from the bottom side of the triangle we
have $i(t(e),e)+1\ge 6$.

\begin{center}
\setlength{\unitlength}{2mm}
\begin{picture}(58,29)

\linethickness{0.3pt}\qbezier(13.8,1)(13.8,1)(44.2,1)
\linethickness{0.3pt}\qbezier(13.8,1)(13.8,1)(29,27.8)
\linethickness{0.3pt}\qbezier(29,27.8)(29,27.8)(44.2,1)

\linethickness{0.3pt}\qbezier(17.6,7.7)(17.6,7.7)(40.4,7.7)
\linethickness{0.3pt}\qbezier(21.4,14.4)(21.4,14.4)(36.6,14.4)
\linethickness{0.3pt}\qbezier(25.2,21.1)(25.2,21.1)(32.8,21.1)

\linethickness{0.3pt}\qbezier(17.6,7.7)(17.6,7.7)(21.4,1)
\linethickness{0.3pt}\qbezier(21.4,14.4)(21.4,14.4)(29,1)
\linethickness{0.3pt}\qbezier(25.2,21.1)(25.2,21.1)(36.6,1)

\linethickness{0.3pt}\qbezier(21.4,1)(21.4,1)(32.8,21.1)
\linethickness{0.3pt}\qbezier(29,1)(29,1)(36.6,14.4)
\linethickness{0.3pt}\qbezier(36.6,1)(36.6,1)(40.4,7.7)

\put(17.6,3.2){1} \put(25.2,3.2){1} \put(32.8,3.2){1}
\put(40.4,3.2){1}

\put(25.2,12.1){4} \put(32.8,12.1){4}

\put(25.2,16.6){5} \put(32.8,16.6){5}

\put(21.4,5.4){2} \put(29,5.4){2} \put(36.6,5.4){2}

\put(21.4,9.9){3} \put(29,9.9){3} \put(36.6,9.9){3}

\put(29,18.8){6}

\put(29,23.3){7}
\end{picture}
\bigskip

\centerline{Figure 4}
\end{center}

The values of the absolute index $i(F)$ for faces of $T_5$ are
shown in Figure 5. Comparing Figures 4 and 5 we see that for each
face $F$ from $N(e)$ for $e$ from the bottom side, and the
following face $\widetilde{F}$ the sum $i(F,e)+\min_{\tilde e\ne
e} i(\widetilde{F},\tilde e)+1$ is at least $6$.

\begin{center}
\setlength{\unitlength}{2mm}
\begin{picture}(58,29)

\linethickness{0.3pt}\qbezier(13.8,1)(13.8,1)(44.2,1)
\linethickness{0.3pt}\qbezier(13.8,1)(13.8,1)(29,27.8)
\linethickness{0.3pt}\qbezier(29,27.8)(29,27.8)(44.2,1)

\linethickness{0.3pt}\qbezier(17.6,7.7)(17.6,7.7)(40.4,7.7)
\linethickness{0.3pt}\qbezier(21.4,14.4)(21.4,14.4)(36.6,14.4)
\linethickness{0.3pt}\qbezier(25.2,21.1)(25.2,21.1)(32.8,21.1)

\linethickness{0.3pt}\qbezier(17.6,7.7)(17.6,7.7)(21.4,1)
\linethickness{0.3pt}\qbezier(21.4,14.4)(21.4,14.4)(29,1)
\linethickness{0.3pt}\qbezier(25.2,21.1)(25.2,21.1)(36.6,1)

\linethickness{0.3pt}\qbezier(21.4,1)(21.4,1)(32.8,21.1)
\linethickness{0.3pt}\qbezier(29,1)(29,1)(36.6,14.4)
\linethickness{0.3pt}\qbezier(36.6,1)(36.6,1)(40.4,7.7)

\put(17.6,3.2){1} \put(25.2,3.2){1} \put(32.8,3.2){1}
\put(40.4,3.2){1}

\put(25.2,12.1){2} \put(32.8,12.1){2}

\put(25.2,16.6){1} \put(32.8,16.6){1}

\put(21.4,5.4){2} \put(29,5.4){2} \put(36.6,5.4){2}

\put(21.4,9.9){1} \put(29,9.9){3} \put(36.6,9.9){1}

\put(29,18.8){2}

\put(29,23.3){1}
\end{picture}
\bigskip

\centerline{Figure 5}
\end{center}

\medskip

By Theorem \ref{T:ICvsS} we get $s(T_5)\ge 6$.
\medskip

Applying Proposition \ref{P:above} to the values of $i(F)$ in
Figure 5 we get $s(T_5)\le 6$.
\medskip

Observe that if we add one row on each side of $T_5$ we get $T_8$,
the index of each triangle from $T_5$ increases by $2$. If we
construct $\mathcal{S}_8$ in a similar way (that is, letting $C$
to be the central face and extending each of the tails by two
edges), we get $\ci(\mathcal{S}_8)=10$. Applying Proposition
\ref{P:above} we get $s(T_8)=10$.\medskip

It is easy to see that the same pattern repeats. Each time when we
add a row from each side, the index of the central square
increases by $2$ and the spanning tree congestion increases by
$4$. By induction, this implies $s(T_{3n+2})=4n+2$.

\begin{center}
\setlength{\unitlength}{2mm}
\begin{picture}(58,36)

\linethickness{0.3pt}\qbezier(10,1)(10,1)(48,1)
\linethickness{0.3pt}\qbezier(10,1)(10,1)(29,34.5)
\linethickness{0.3pt}\qbezier(29,34.5)(29,34.5)(48,1)

\linethickness{0.3pt}\qbezier(13.8,7.7)(13.8,7.7)(44.2,7.7)
\linethickness{0.3pt}\qbezier(17.6,14.4)(17.6,14.4)(40.4,14.4)
\linethickness{0.3pt}\qbezier(21.4,21.1)(21.4,21.1)(36.6,21.1)
\linethickness{0.3pt}\qbezier(25.2,27.8)(25.2,27.8)(32.8,27.8)

\linethickness{0.3pt}\qbezier(13.8,7.7)(13.8,7.7)(17.6,1)
\linethickness{0.3pt}\qbezier(17.6,14.4)(17.6,14.4)(25.2,1)
\linethickness{0.3pt}\qbezier(21.4,21.1)(21.4,21.1)(32.8,1)
\linethickness{0.3pt}\qbezier(25.2,27.8)(25.2,27.8)(40.4,1)

\linethickness{0.3pt}\qbezier(32.8,27.8)(32.8,27.8)(17.6,1)
\linethickness{0.3pt}\qbezier(36.6,21.1)(36.6,21.1)(25.2,1)
\linethickness{0.3pt}\qbezier(40.4,14.4)(40.4,14.4)(32.8,1)
\linethickness{0.3pt}\qbezier(44.2,7.7)(44.2,7.7)(40.4,1)

\put(28.1,9.9){$C$}

\linethickness{1.5pt}\qbezier(17.6,9.9)(17.6,9.9)(21.4,12.1)
\linethickness{1.5pt}\qbezier(21.4,12.1)(21.4,12.1)(25.2,9.9)
\linethickness{1.5pt}\qbezier(25.2,9.9)(25.2,9.9)(29,12.1)

\linethickness{1.5pt}\qbezier(29,12.1)(29,12.1)(29,16.6)
\linethickness{1.5pt}\qbezier(29,16.6)(29,16.6)(32.8,18.8)
\linethickness{1.5pt}\qbezier(32.8,18.8)(32.8,18.8)(32.8,23.3)

\linethickness{1.5pt}\qbezier(29,12.1)(29,12.1)(32.8,9.9)
\linethickness{1.5pt}\qbezier(32.8,9.9)(32.8,9.9)(32.8,5.4)
\linethickness{1.5pt}\qbezier(32.8,5.4)(32.8,5.4)(36.6,3.2)

\end{picture}
\bigskip

\centerline{Figure 6}
\end{center}

For $T_{3n}$ the argument is almost the same. Figure 6 shows the
suggested center-tail systems in $T_6$. The argument in this case
is a repetition of the argument for $T_{3n+2}$.
\medskip

For $T_{3n+1}$ $n\ge 1$ we suggest somewhat different center-tail
systems (because there is no central triangle). In this case the
center consists of $6$ faces (we mark all of them using $C$ on
Figure 7), but there are still three tails, and the assignment of
tails is similar to the previous cases: all edges from the bottom
side of the triangle are assigned the tail which goes in the
upward-right direction. Figure 7 shows the center and the tails
for $T_7$. The argument is quite similar to the argument for
previous cases, but now we have to compute the first minimum,
which is equal to $8$ for $T_7$ and is equal to $4n$ for
$T_{3n+1}$. The second and the third minima are equal to $10$ for
$T_7$ and to $4n+2$ for $T_{3n+1}$.
\end{proof}

\begin{center}
\setlength{\unitlength}{2mm}
\begin{picture}(58,43)

\linethickness{0.3pt}\qbezier(6.2,1)(6.2,1)(51.8,1)
\linethickness{0.3pt}\qbezier(6.2,1)(6.2,1)(29,41.2)
\linethickness{0.3pt}\qbezier(29,41.2)(29,41.2)(51.8,1)

\linethickness{0.3pt}\qbezier(10,7.7)(10,7.7)(48,7.7)
\linethickness{0.3pt}\qbezier(13.8,14.4)(13.8,14.4)(44.2,14.4)
\linethickness{0.3pt}\qbezier(17.6,21.1)(17.6,21.1)(40.4,21.1)
\linethickness{0.3pt}\qbezier(21.4,27.8)(21.4,27.8)(36.6,27.8)
\linethickness{0.3pt}\qbezier(25.2,34.5)(25.2,34.5)(32.8,34.5)

\linethickness{0.3pt}\qbezier(10,7.7)(10,7.7)(13.8,1)
\linethickness{0.3pt}\qbezier(13.8,14.4)(13.8,14.4)(21.4,1)
\linethickness{0.3pt}\qbezier(17.6,21.1)(17.6,21.1)(29,1)
\linethickness{0.3pt}\qbezier(21.4,27.8)(21.4,27.8)(36.6,1)
\linethickness{0.3pt}\qbezier(25.2,34.5)(25.2,34.5)(44.2,1)

\linethickness{0.3pt}\qbezier(32.8,34.5)(32.8,34.5)(13.8,1)
\linethickness{0.3pt}\qbezier(36.6,27.8)(36.6,27.8)(21.4,1)
\linethickness{0.3pt}\qbezier(40.4,21.1)(40.4,21.1)(29,1)
\linethickness{0.3pt}\qbezier(44.2,14.4)(44.2,14.4)(36.6,1)
\linethickness{0.3pt}\qbezier(48,7.7)(48,7.7)(44.2,1)

\put(28.1,9.9){$C$}
\put(28.5,16.6){$C$}\put(24.7,16.2){$C$}\put(32.3,16.2){$C$}\put(25.4,12.3){$C$}\put(32,12.3){$C$}

\linethickness{1.5pt}\qbezier(17.6,12.1)(17.6,12.1)(21.4,9.9)
\linethickness{1.5pt}\qbezier(21.4,9.9)(21.4,9.9)(25.2,12.1)
\linethickness{1.5pt}\qbezier(13.8,9.9)(13.8,9.9)(17.6,12.1)

\linethickness{1.5pt}\qbezier(29,18.8)(29,18.8)(29,23.3)
\linethickness{1.5pt}\qbezier(29,23.3)(29,23.3)(32.8,25.5)
\linethickness{1.5pt}\qbezier(32.8,25.5)(32.8,25.5)(32.8,30)

\linethickness{1.5pt}\qbezier(32.8,12.1)(32.8,12.1)(36.6,9.9)
\linethickness{1.5pt}\qbezier(36.6,9.9)(36.6,9.9)(36.6,5.4)
\linethickness{1.5pt}\qbezier(36.6,5.4)(36.6,5.4)(40.4,3.2)
\end{picture}
\bigskip

\centerline{Figure 7}
\end{center}

\begin{remark}{Remark} The values of the spanning tree congestion for the graphs
$\{T_k\}$ were studied in \cite{CCV07}. Unfortunately one of the
formulas in \cite{CCV07} is erroneous (our Theorem \ref{T:triang}
implies that the formula
$s(T_m)=2\left(\left\lfloor\frac{m-1}3\right\rfloor+\left\lfloor
\frac m3\right\rfloor\right)$ ($m\ge 4$) in \cite[Theorem
2]{CCV07} does not hold for $m=3n+2$). Also it is not clear
whether the authors of \cite{CCV07} had proofs of the
corresponding estimates from below. The reason for this doubt: the
proof of the estimate from below for square grids contains gaps
(one error is at the top of page 82: there can be several edges
connecting $M_e$ and $P$; the second error is in item (b) on page
82: one can construct examples which show that the congestion with
respect to trees with added edges is not related with the
congestion in the original tree in the stated way), and for
triangular grids no proof of the estimate from below is given in
\cite{CCV07}, the authors just say that the proof is identical
with the case of square grids.
\end{remark}

\begin{remark}{Final remark} It is not difficult to verify that
center-tail systems can be used to prove the results of
\cite{CO09} and \cite{Hru08} on rectangular planar grids and the
result of \cite[Theorem 3]{CCV07} on hexagonal grids. However, it
is far from being clear whether it is possible to use center-tail
systems to develop an algorithm for finding the spanning tree
congestion for general planar graphs.
\end{remark}

\end{large}

\begin{large}

\end{large}
\end{document}